\newtheoremstyle{break}
  {}
  {}
  {\itshape}
  {}
  {\bfseries}
  {.}
  {\newline}
  {}
\theoremstyle{break}
\newtheorem{defn}{Definition}[section]
\newtheorem{cor}[defn]{Corollary}
\newtheorem{lem}[defn]{Lemma}
\newtheorem{thm}[defn]{Theorem}
\newcommand{\abs}[1]{\left| #1 \right|}
\newcommand{\CN}{\mathbb{C}}
\newcommand{\D}{\mathrm{d}}
\newcommand{\FT}{\operatorname{\mathcal{F}}}
\newcommand{\iu}{\mathrm{i}}
\newcommand{\iv}[1]{\frac{1}{ #1}}
\newcommand{\jb}[1]{\langle #1 \rangle}
\newcommand{\N}{\mathbb{N}}
\newcommand{\norm}[1]{\left\Vert #1 \right\Vert}
\newcommand{\R}{\mathbb{R}}
\newcommand{\set}[1]{\left\{ #1 \right\}}
\newcommand{\setp}[2]{\left\{ #1 \, \Big| \, #2 \right\}}
\DeclareMathOperator{\supp}{supp}
\DeclareMathOperator{\opS}{S}
\DeclareMathOperator{\opT}{T}
\newcommand{\Z}{\mathbb{Z}}
\title[Anisotropic higher order NLS and modulation spaces]
{Higher order NLS with anisotropic dispersion and modulation
spaces: a global existence and scattering result}
\subjclass[2010]{Primary 35A01, 35A02, 35Q55; Secondary 35B40.} 
\keywords{Global existence, Modulation spaces, Nonlinear dispersive equation,
Scattering}
\author{Chaichenets, Leonid}
\address{Leonid Chaichenets, Technical University of Dresden,
Institute of Analysis, 01069 Dresden, Germany}
\email{leonid.chaichenets@tu-dresden.de}
\author{Pattakos, Nikolaos}
\address{Nikolaos Pattakos, Department of Mathematics, Institute for
Analysis, Karlsruhe Institute of Technology, 76128 karlsruhe, Germany}
\email{nikolaos.pattakos@gmail.com}
\begin{document}
\begin{abstract}
In this paper we transfer a small data global existence and
scattering result by Wang and Hudzik to the more general case of
modulation spaces $M_{p, q}^s(\R^d)$ where $q = 1$ and $s \geq 0$ or
$q \in (1, \infty]$ and $s > \frac{d}{q'}$ and to the nonlinear
Schrödinger equation with higher order anisotropic dispersion.
\end{abstract}

\maketitle
\section{Introduction and main results}
We are interested in the following Cauchy problem for the higher-order
nonlinear Schrödinger equation (NLS) with anisotropic dispersion
\begin{equation}
\label{eqn:cauchy_nls_ho}
\left\{
\begin{IEEEeqnarraybox}[][c]{rCl}
\iu \partial_t u + \alpha \Delta u +
\iu \beta \frac{\partial^3 u}{\partial x_1^3} +
\gamma \frac{\partial^4 u}{\partial x_1^4} + f(u) & = & 0, \\
u(t = 0, \cdot) & = & u_0,
\end{IEEEeqnarraybox}
\right.
\end{equation}
where $u = u (t, x)$,
$(t, x) = (t, x_1, x') \in \R \times \R \times \R^{d - 1}$, $d \geq 2$,
$\alpha \in \R \setminus \set{0}$ and
$(\beta, \gamma) \in  \R^2 \setminus \set{(0, 0)}$.
We consider power-like and exponential non-linearities $f$. The former
non-linearity,
\begin{equation}
\label{eqn:power_nonlin}
f(u) = \pi^{m + 1}(u),
\end{equation}
is any product of in total $m + 1 \in \N$ copies of $u$ and
$\overline{u}$ of any sign, e.g. $f(u) = -\abs{u}^2u$. The latter
nonlinearity has the form
\begin{equation}
\label{eqn:exp_nonlin}
f(u) = \lambda\left(e^{\rho \abs{u}^2} - 1\right) u,
\end{equation}
where $\lambda \in \CN$ and $\rho > 0$.
Such PDEs arise in the context of high-speed
soliton transmission in long-haul optical communication systems and were
introduced by Karpman, see \cite{karpman1996}, \cite{diaz2008} and
\cite{karlsson1994}. The case where the coefficients $\alpha, \beta, \gamma$ are
time-dependent has been studied in \cite{carvajal2015} in one dimension
for the cubic nonlinearity, $f(u) = \abs{u}^2 u$ with initial data in
$L^2(\R)$-based Sobolev spaces. Before we state our results, we need to recall some
concepts and make some definitions.

The initial data $u_0$ in our case comes from a \emph{modulation space}. Modulation
spaces were introduced by Feichtinger in \cite{feichtinger1983} (see
also \cite{groechenig2001} or \cite{wang2007} for a gentle
introduction). Let $Q_0 \coloneqq \left(-\iv{2}, \iv2 \right]^d$ and
$Q_k \coloneqq k + Q_0$ for $k \in \Z^d$. Consider any smooth partition
of unity $(\sigma_k) \in C^\infty(\R^d)^{\Z^d}$ which is adapted to
$(Q_k)$, i.e. $\supp(\sigma_k) \subseteq B_{\sqrt{d}}(k)$ and
$\abs{\sigma_k(\xi)} \geq C$ for some $C > 0$, all $k \in \Z^d$ and
all $\xi \in Q_k$. Define the \emph{isometric decomposition operators}
\begin{equation}
\label{eqn:idc}
\Box_k \coloneqq \FT^{-1} \sigma_k \FT \qquad \forall k \in \Z^d,
\end{equation}
which we also call the \emph{box operators}. For $p, q \in [1, \infty]$
and $s \in \R$ we define the modulation space
\begin{eqnarray}
\label{eqn:modspace}
M_{p, q}^s(\R^d) & \coloneqq &
\setp{f \in S'(\R^d)}{\norm{f}_{M_{p, q}^s(\R^d)} < \infty},
\qquad \text{where} \\
\label{eqn:modnorm}
\norm{f}_{M_{p, q}^s(\R^d)} & \coloneqq &
\norm{\left(\jb{k}^s \norm{\Box_k f}_p \right)_k}_{l^q(\Z^d)},
\end{eqnarray}
$S'(\R^d)$ is the space of \emph{tempered distributions} and
$\jb{k} \coloneqq \sqrt{1 + \abs{k}^2}$ denotes the
\emph{Japanese bracket}. We note that a different partition of unity
leads to an equivalent norm. We shall sometimes shorten
$M_{p, q}^s(\R^d)$ to $M_{p, q}^s$ and $M_{p, q}^0$ to $M_{p, q}$.

The solution $u$ is sought in a \emph{Planchon-type space}. Spaces of
this type go back to \cite{planchon2000}. The following variant adapted
to modulation spaces has been introduced in \cite{wang2007}. For
$p, q, r \in [1, \infty]$ and $s \in \R$ we define
\begin{equation}
\label{eqn:planchon_space}
l_{\Box}^{s, q}(L^r(\R, L^p(\R^d))) \coloneqq
\setp{u \in S'(\R^{d + 1})}
{\norm{u}_{l_{\Box}^{s, q}(L^r(\R, L^p(\R^d)))} < \infty},
\end{equation}
where
\begin{equation}
\label{eqn:planchon_norm}
\norm{u}_{l_{\Box}^{s, q}(L^r(\R, L^p(\R^d)))} \coloneqq
\norm{\left(\jb{k}^s \norm{(\Box_k u)(t, x)}_{L_t^r L_x^p}
\right)_{k \in \Z^d}}_{l^q}
\end{equation}
(the box operator acts in the space variable $x$ only). As already done
above we shall indicate in which variable which norm is taken by writing
e.g. $L^r_t L^p_x$ for the mixed-norm space $L^r(\R, L^p(\R^d))$. Also,
we shall sometimes shorten $l^{s, q}_\Box(L^r(\R, L^p(\R^d)))$ to
$l^{s, q}_\Box(L^r L^p)$.

Finally, we denote by $\lceil \cdot \rceil$ the ceiling and by
$\lfloor \cdot \rfloor$ the floor functions.

We are now in the position to state our results.
\begin{thm}
\label{thm:power}
Let $d \in \N$ with $d \geq 2$, $f$ be a power-like nonlinearity from
Equation
\eqref{eqn:power_nonlin} and assume
\begin{equation}
\label{eqn:minimal_power}
m \geq m_0 \coloneqq
\left\lceil \frac{4}{d - \iv{c_\gamma}} \right\rceil,
\qquad \text{where} \qquad
c_\gamma =
\begin{cases}
2, & \text{for } \gamma \neq 0, \\
3, & \text{for } \gamma = 0.
\end{cases}.
\end{equation}
Moreover, let
$\iv{r} \in I_{m ,d}$ and $\iv{p} \in J_{r, d}$,
where
\begin{eqnarray}
\label{eqn:time_range}
I_{m, d} & \coloneqq &
\left[\iv{2(m + 1)}, \iv{m_0 + 1} \right], \\
\label{eqn:space_range}
J_{r, d} & \coloneqq &
\left[
\iv{2} - \frac{2}{r \left(d - \iv{c_\gamma}\right)} -
\iv{2 (l + 1)} \left(l - \frac{4}{d - \iv{c_\gamma}}
\right), \right. \\
& & \left.
\iv{2} - \frac{2}{r \left(d - \iv{c_\gamma}\right)}
\right] \qquad \text{and} \\
\label{eqn:effective_nonlinearity}
l & \coloneqq & \min \set{\lfloor r \rfloor - 1, m}.
\end{eqnarray}
If $q = 1$ let $s \geq 0$ and if $q > 1$ let $s > \frac{d}{q'}$.

Then, there exists a $\delta > 0$ such that for any
$u_0 \in M_{2, q}^s$  with $\norm{u_0}_{M_{2, q}^s} \leq \delta$
the Cauchy problem \eqref{eqn:cauchy_nls_ho} has a unique global (mild)
solution in
\begin{equation}
\label{eqn:solspace}
X \coloneqq
l_\Box^{s, q}(L^\infty(\R, L^2(\R^d))) \cap
l_\Box^{s, q}(L^r(\R, L^p(\R^d)))
\subseteq C(\R, M_{2, q}^s(\R^d)).
\end{equation}
\end{thm}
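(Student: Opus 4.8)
The plan is to solve the Duhamel (mild) formulation
\[
u = \Phi(u), \qquad
\Phi(u)(t) \coloneqq U(t) u_0 + \iu \int_0^t U(t - \tau) f(u(\tau)) \, \D\tau,
\]
where $U(t) = e^{\iu t \phi(D)}$ is the Fourier multiplier associated with the linear part, by the Banach fixed point theorem on a closed ball $B_R \coloneqq \set{u \in X : \norm{u}_X \leq R}$ of small radius $R$. Everything then reduces to three estimates: a homogeneous linear estimate $\norm{U(\cdot) u_0}_X \lesssim \norm{u_0}_{M_{2, q}^s}$, an inhomogeneous (retarded) estimate controlling the Duhamel integral in $X$ by a suitable dual norm of $f(u)$, and a nonlinear estimate bounding that dual norm by $\norm{u}_X^{m + 1}$. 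Together they yield $\norm{\Phi(u)}_X \lesssim \delta + R^{m + 1}$ and, by multilinearity of $f$, a matching difference estimate, so that $\Phi$ is a self-contraction of $B_R$ once $\delta$ and $R$ are chosen small.

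The linear estimates are carried out frequency box by frequency box, which is exactly what makes the modulation-space scale natural. The $l_\Box^{s, q}(L^\infty L^2)$ component is immediate from the $L^2$-isometry of $U(t)$, which commutes with each $\Box_k$. For the $l_\Box^{s, q}(L^r L^p)$ component the key input is a frequency-localised dispersive estimate: on a unit cube the phase $\phi$ is, modulo lower-order terms, quadratic in the $d - 1$ transverse variables $\xi'$, while in $\xi_1$ it has nonvanishing fourth derivative when $\gamma \neq 0$ and nonvanishing third derivative when $\gamma = 0$. Stationary phase and van der Corput then give decay $\abs{t}^{-(d - 1)/2}$ in the transverse directions and $\abs{t}^{-(1 - \iv{c_\gamma})/2}$ in $x_1$ — that is $\abs{t}^{-1/4}$ respectively $\abs{t}^{-1/3}$ — so that the combined decay corresponds to an effective dispersive dimension $d - \iv{c_\gamma}$. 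A $TT^*$ argument together with the Hardy--Littlewood--Sobolev inequality upgrades this to a uniform-in-$k$ Strichartz estimate precisely at exponents with $\iv{p} = \iv 2 - \frac{2}{r (d - \iv{c_\gamma})}$, the upper endpoint of $J_{r, d}$; the remaining range of $J_{r, d}$ and the range $I_{m, d}$ arise by interpolation and from the summability requirements below. Summing the fixed-frequency bounds in $\ell^q$ against the weight $\jb{k}^s$ transfers them to the full spaces, and the Christ--Kiselev lemma supplies the retarded version.

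The nonlinear estimate is where the exponent bookkeeping must close and is the main obstacle. Writing $f(u)$ as a product of $m + 1$ factors, I would distribute them by Hölder in time, placing $l + 1$ of them into the dispersive norm $L_t^r L_x^p$ and the remaining $m - l$ into $L_t^\infty L_x^2$; the choice $l = \min \set{\lfloor r \rfloor - 1, m}$ is forced by requiring that the resulting time exponent be integrable (hence the cap $\lfloor r \rfloor - 1$) while never exceeding the available number $m$ of factors. In the space variable the product is estimated box by box using that $M_{p, q}^s$ is a Banach algebra in the stated index range ($q = 1$ with $s \geq 0$, or $q > 1$ with $s > \frac{d}{q'}$), which controls the weighted box sum of the product by the product of the individual $X$-norms. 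The delicate point is to check that the ranges $I_{m, d}$ and $J_{r, d}$, the lower bound $m \geq m_0$, and the definition of $l$ are mutually consistent, so that the combined dual norm of $f(u)$ is genuinely dominated by $\norm{u}_X^{m + 1}$ with a finite time weight; this is precisely the scaling and integrability computation that dictates the particular form of those ranges.

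Finally, the embedding $X \subseteq C(\R, M_{2, q}^s)$ follows from the strong continuity of $t \mapsto U(t) u_0$ on $M_{2, q}^s$ and the absolute continuity of the Duhamel integral granted by the estimates above, which in the same breath yields that the constructed solution scatters in $M_{2, q}^s$ as $t \to \pm \infty$.
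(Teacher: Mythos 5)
Your overall architecture coincides with the paper's: Duhamel formulation, Banach fixed point on a small ball of $X$, Strichartz estimates transferred box-wise to the spaces $l^{s,q}_\Box(L^r L^p)$ (the paper simply cites the global Strichartz estimates for $W(t)$ from Bouchel--Saut and commutes with $\Box_k$, so your re-derivation via van der Corput and $TT^*$ is unnecessary but harmless), and a multilinear Hölder-type estimate placing $l+1$ factors in the dispersive norm and $m-l$ factors in $L^\infty_t L^2_x$. However, there is a genuine gap precisely where the exponent bookkeeping must close --- and that bookkeeping is the actual content of the hypotheses $\iv{p}\in J_{r,d}$, $\iv{r}\in I_{m,d}$ and $l=\min\set{\lfloor r\rfloor-1,m}$, so it cannot be deferred. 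First, you obtain the Strichartz bound only at the admissible exponent $\iv{p_{\mathrm a}}=\iv{2}-\frac{2}{r(d-\iv{c_\gamma})}$ and claim the rest of $J_{r,d}$ ``by interpolation''; but the range needed is $p\ge p_{\mathrm a}$, i.e.\ spatial exponents \emph{larger} than the admissible one, and there is nothing on that side to interpolate against. The correct tool is Bernstein's multiplier estimate on unit frequency boxes, $\norm{\Box_k u}_{L^{p_2}}\lesssim\norm{\Box_k u}_{L^{p_1}}$ for $p_1\le p_2$, giving $l^{s,q}_\Box(L^r L^{p_1})\hookrightarrow l^{s,q}_\Box(L^r L^{p_2})$; this embedding is used twice, once to lower the left-hand side from $L^p$ to $L^{p_{\mathrm a}}$ and once at the end of the nonlinear estimate.

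Second, the Hölder split as you state it --- $l+1$ factors in $L^r_tL^p_x$ and $m-l$ factors in $L^\infty_tL^2_x$ --- would force the dual spatial exponent $\iv{\tilde p}=\frac{l+1}{p}+\frac{m-l}{2}$, which exceeds $1$ as soon as $m-l\ge 2$ (a typical situation, since $l$ is capped at $\lfloor r\rfloor-1$ while $m$ may be large), and in any case need not make $(\tilde p',\tilde r')$ admissible. What actually works is: set $\tilde r=r/(l+1)\in[1,2]$, \emph{define} $\tilde p$ by dual admissibility, estimate the $l+1$ dispersive factors in $L^r_tL^{(l+1)\tilde p}_x$, and absorb the remaining $m-l$ factors at spatial exponent $\infty$ --- not $2$ --- using the box-wise embedding $L^2\hookrightarrow L^\infty$ so that they are still measured in $\norm{\cdot}_{l^{s,q}_\Box(L^\infty L^2)}$. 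One then verifies $(l+1)\tilde p\ge p$, which is exactly the lower endpoint of $J_{r,d}$, and applies Bernstein once more. Without the $L^2\to L^\infty$ step and the dual-admissibility choice of $\tilde p$, the estimate does not close; this is a missing idea rather than omitted routine computation. The remaining ingredients you invoke (algebra property of $M^s_{p,q}$ in the stated index range, contraction for small $\delta$, continuity in time) match the paper.
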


\begin{thm}
\label{thm:exponential}
Let $d \geq 2$, $f$ be an exponential non-linearity from Equation
\eqref{eqn:exp_nonlin}. Moreover, let $\iv{r} \in I_{3, d}$ and
$\iv{p} \in J_{r, d}$ (see Equation \eqref{eqn:minimal_power},
\eqref{eqn:time_range} and \eqref{eqn:space_range}). If $q = 1$ let
$s \geq p$ and if $q > 1$ let $s > \frac{d}{q'}$.

Then, there exists a $\delta > 0$ such that for any $u_0 \in M_{2, q}^s$
with $\norm{u_0}_{M_{2, q}^s} \leq \delta$ the Cauchy problem
\eqref{eqn:cauchy_nls_ho} has a unique global (mild) solution in $X$
(given by Equation \eqref{eqn:solspace}).
\end{thm}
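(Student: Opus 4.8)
The plan is to realize the (mild) solution as the fixed point of the Duhamel map
\[
\Phi(u)(t) = U(t)u_0 + \iu\int_0^t U(t-\tau)f(u(\tau))\,\D\tau
\]
on a small closed ball of $X$, where $U(t) = \FT^{-1}e^{-\iu t(\alpha\abs{\xi}^2 - \beta\xi_1^3 - \gamma\xi_1^4)}\FT$ is the propagator of the linear part of \eqref{eqn:cauchy_nls_ho}. Everything rests on two ingredients in the Planchon-type space $X$: a homogeneous and a retarded Strichartz estimate, $\norm{U(\cdot)u_0}_X\lesssim\norm{u_0}_{M_{2,q}^s}$ and $\norm{\int_0^\cdot U(\cdot-\tau)F(\tau)\,\D\tau}_X\lesssim\norm{F}_Y$ for a suitable dual space $Y$, valid exactly on the exponent ranges $\iv r\in I_{m,d}$, $\iv p\in J_{r,d}$; and a multilinear estimate bounding the $Y$-norm of a product of $2n+1$ factors by the product of their $X$-norms. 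For the exponential nonlinearity I would first write $f(u)=\lambda\sum_{n\geq1}\frac{\rho^n}{n!}\abs{u}^{2n}u$ and treat each summand as a power-like nonlinearity of degree $2n+1$, so that the analysis reduces to the power case of Theorem \ref{thm:power} together with a summation over $n$.

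For the linear estimates I would begin with the frequency-localized dispersive bound. As the box operators localize to the unit cubes $Q_k$, a van der Corput / stationary-phase analysis of the anisotropic phase $\alpha\abs{\xi}^2-\beta\xi_1^3-\gamma\xi_1^4$ gives, uniformly in $k$, the decay $\norm{\Box_kU(t)\Box_k}_{L^1\to L^\infty}\lesssim\abs{t}^{-\frac12(d-\iv{c_\gamma})}$: the $d-1$ transverse directions contribute the Schrödinger decay $\abs{t}^{-(d-1)/2}$, while the $x_1$-direction contributes $\abs{t}^{-1/4}$ for $\gamma\neq0$ and $\abs{t}^{-1/3}$ for $\gamma=0$, which is precisely the rate encoded by $c_\gamma$. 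Interpolating with the trivial $L^2$-bound and running the usual $TT^*$ and Hardy--Littlewood--Sobolev argument cube by cube, then summing against the $\jb{k}^s$-weighted $l^q$-norm, yields the homogeneous and retarded Strichartz estimates on $X$ for $\iv r\in I_{m,d}$ and $\iv p\in J_{r,d}$; the defining constraints of these intervals are exactly what makes the decay time-integrable in the Duhamel term. These estimates are the common backbone of both theorems, and for the exponential case I need them only on the range $\iv r\in I_{3,d}$, $\iv p\in J_{r,d}$ fixed in the hypothesis, which is chosen so that the retarded estimate holds simultaneously for every power $\abs{u}^{2n}u$ occurring in the expansion.

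The crux is the nonlinear step. The hypothesis on $s$ --- namely $s>\frac{d}{q'}$ for $q>1$, and the stated condition for $q=1$ --- makes $M_{2,q}^s$, and with it the space $X$, a multiplication algebra with a \emph{uniform} constant. I would exploit this by placing the at most $l=\min\{\lfloor r\rfloor-1,2n\}\leq\lfloor r\rfloor-1$ ``costly'' factors into the Strichartz component $l^{s,q}_\Box(L^rL^p)$ and the remaining factors into the energy component $l^{s,q}_\Box(L^\infty L^2)$, which for fixed $t$ is controlled by the modulation-space algebra estimate. Since the number of Strichartz factors is bounded by $\lfloor r\rfloor-1$ independently of $n$, this produces, with a constant $C$ not depending on $n$, the bound $\norm{\int_0^\cdot U(\cdot-\tau)(\abs{u}^{2n}u)(\tau)\,\D\tau}_X\lesssim C^{2n}\norm{u}_X^{2n+1}$. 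Summing against $\lambda\rho^n/n!$ reproduces the exponential profile,
\[
\Bigl\lVert\,\iu\int_0^\cdot U(\cdot-\tau)f(u)\,\D\tau\,\Bigr\rVert_X
\lesssim\abs{\lambda}\,\norm{u}_X\bigl(e^{c\rho\norm{u}_X^2}-1\bigr),
\]
and an analogous telescoping estimate for $f(u)-f(v)$ gives a Lipschitz bound of the same exponential type. Together with $\norm{U(\cdot)u_0}_X\lesssim\norm{u_0}_{M_{2,q}^s}\leq\delta$, this shows that on a ball of radius $R$ with $R$ and $\delta$ so small that $e^{c\rho R^2}-1$ is small, $\Phi$ is a contractive self-map; its unique fixed point is the global solution. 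Finiteness of the global $X$-norm yields the embedding into $C(\R,M_{2,q}^s)$ and, via the asymptotic states $u_\pm=u_0+\iu\int_0^{\pm\infty}U(-\tau)f(u(\tau))\,\D\tau$, scattering.

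I expect the main obstacle to be exactly the \emph{uniformity in $n$} of the multilinear constant: a naive iteration of a bilinear algebra estimate could produce a constant growing faster than $1/n!$ can absorb, so the product estimate must be organized --- through the bounded number $\lfloor r\rfloor-1$ of Strichartz factors and careful bookkeeping of the $\jb{k}^s$-weights --- to grow only geometrically like $C^{2n}$. This is precisely where the strengthened hypothesis on $s$ in the $q=1$ case (compared with Theorem \ref{thm:power}) enters, and it is the one point that genuinely distinguishes the exponential result from a term-by-term invocation of the power-like one.
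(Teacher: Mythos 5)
Your proposal follows essentially the same route as the paper: expand $f(u)=\lambda\sum_{n\geq1}\frac{\rho^n}{n!}\abs{u}^{2n}u$, apply the power-case Strichartz and H\"older-like machinery of Theorem \ref{thm:power} to each term $\pi^{2n+1}(u)$ (the hypotheses on $r,p$ are chosen precisely so that this works for every $n\geq1$, since $I_{m,d}\subseteq I_{n,d}$ for $m\leq n$), and sum the resulting geometric-in-$n$ constants against $\rho^n/n!$ to get a bound of the form $\delta(e^{c\rho\delta^2}-1)$. One small caveat: your attribution of the uniformity-in-$n$ to the strengthened condition on $s$ for $q=1$ is not how the paper handles it --- the uniformity comes solely from the factorial absorbing the geometric growth of the multilinear constants, and the condition ``$s\geq p$'' in the statement plays no role in the paper's argument (it is in all likelihood a typo for $s\geq0$).
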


\begin{cor}
\label{cor:scattering}
If, in Theorem \ref{thm:power}, one additionally has $q \leq m + 1$ then the
scattering operator $S$ carries a whole neighbourhood of $0$ in $M_{2, q}^s(\R^d)$ into $M_{2,q}^{s}(\R^d)$. The same is true for Theorem \ref{thm:exponential}, if $q \leq 3$.
\end{cor}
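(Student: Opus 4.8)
The plan is to realise the scattering operator as $S = \Omega_+^{-1} \circ \Omega_-$, where $\Omega_\pm$ are the wave operators attached to the prescribed asymptotic behaviour at $\pm\infty$, and to show that a small incoming state is mapped to an outgoing state that again lies in $M_{2,q}^s$. Write $U(t)$ for the propagator of the linear part of \eqref{eqn:cauchy_nls_ho}. It is a Fourier multiplier with a real phase, hence with a unimodular symbol; consequently it commutes with every $\Box_k$ and acts isometrically on $L^2$, so that $\norm{\Box_k U(t) f}_2 = \norm{\Box_k f}_2$ for all $k$ and $U(t)$ is an isometry on each $M_{2,q}^s$. Given $u_- \in M_{2,q}^s$ with $\norm{u_-}_{M_{2,q}^s} \leq \delta$, I would first construct the solution with the prescribed behaviour at $-\infty$ by running the very same contraction argument that proves Theorem \ref{thm:power} (resp. \ref{thm:exponential}), but with the Duhamel integral anchored at $-\infty$, namely $u(t) = U(t) u_- + \iu \int_{-\infty}^t U(t - \tau) f(u(\tau)) \, \D\tau$. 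Since the homogeneous and inhomogeneous estimates underlying the theorems hold globally in time and are translation invariant, this yields a unique small $u \in X$. I then set $\Omega_- u_- := u$ and $S u_- := u_+ := \lim_{t \to +\infty} U(-t) u(t)$.

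The heart of the matter is the existence of this limit in $M_{2,q}^s$. From the integral equation and the isometry of $U$, for $t_1 < t_2$ one has $\norm{U(-t_2) u(t_2) - U(-t_1) u(t_1)}_{M_{2,q}^s} = \norm{\int_{t_1}^{t_2} U(-\tau) f(u(\tau)) \, \D\tau}_{M_{2,q}^s} \leq \int_{t_1}^{t_2} \norm{f(u(\tau))}_{M_{2,q}^s} \, \D\tau$. Hence it suffices to establish $f(u) \in L^1(\R, M_{2,q}^s)$: then $\bigl(U(-t) u(t)\bigr)_t$ is Cauchy as $t \to +\infty$, the limit $u_+$ exists, and $\norm{u_+}_{M_{2,q}^s} \leq \norm{u_-}_{M_{2,q}^s} + \norm{f(u)}_{L^1(\R, M_{2,q}^s)}$, which simultaneously shows that $S$ carries a neighbourhood of $0$ into $M_{2,q}^s$.

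To bound $f(u)$ in $L^1(\R, M_{2,q}^s)$ I would use that, under the standing hypotheses on $(s, q)$, the space $M_{2,q}^s$ is a Banach algebra. In the power case this gives $\norm{f(u(\tau))}_{M_{2,q}^s} \leq C \norm{u(\tau)}_{M_{2,q}^s}^{m + 1}$, whence $\norm{f(u)}_{L^1(\R, M_{2,q}^s)} \leq C \norm{u}_{L^{m + 1}(\R, M_{2,q}^s)}^{m + 1}$. The finiteness of the right-hand side is exactly where the extra hypothesis enters: by Minkowski's inequality the interchange $\norm{u}_{L^{m + 1}(\R, M_{2,q}^s)} \leq \norm{u}_{l_\Box^{s, q}(L^{m + 1}(\R, L^2))}$ is licit precisely when $q \leq m + 1$, and the resulting Planchon-type norm can be controlled by $\norm{u}_X$ through the linear estimates already established for the theorem, after re-running them for a conveniently chosen admissible exponent pair. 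In the exponential case one expands $f(u) = \lambda \sum_{n \geq 1} \frac{\rho^n}{n!} \abs{u}^{2n} u$, bounds each summand by the algebra property as $C^{2n} \norm{u}_{M_{2,q}^s}^{2n + 1}$, and sums the series, which converges thanks to the smallness $\norm{u}_{M_{2,q}^s} \leq \delta$; the slowest term $n = 1$ is cubic and dictates the requirement $q \leq 3$, the higher terms being harmless since $q \leq 3$ forces $q \leq 2n + 1$ for every $n$.

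The step I expect to be the main obstacle is the last one: verifying that the mixed space-time norm produced by the algebra estimate, of the shape $\norm{u}_{l_\Box^{s, q}(L^{m + 1}(\R, L^2))}$, is finite and dominated by $\norm{u}_X$. This forces one to match the integrability exponent $m + 1$ against the admissible range $I_{m, d}$ of Equation \eqref{eqn:time_range} and to redo the homogeneous and inhomogeneous estimates of the main theorems for that exponent; it is exactly here that the numerology $q \leq m + 1$ (respectively $q \leq 3$) becomes unavoidable. The delicate point is to keep the constants uniform in $t_1, t_2$ so that the tail $\int_{t_1}^{t_2} \norm{f(u(\tau))}_{M_{2,q}^s} \, \D\tau$ genuinely vanishes as $t_1 \to +\infty$, which is what turns the a priori bound into the convergence needed for $S$ to be defined on all of a neighbourhood of $0$.
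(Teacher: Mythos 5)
Your overall skeleton is the same as the paper's: construct the solution by running the contraction with the Duhamel integral anchored at $-\infty$, reduce the existence of $u_0^+ = \lim_{t\to+\infty} W(-t)u(t)$ to showing $f(u) \in L^1(\R, M_{2,q}^s)$, and use Minkowski's inequality (which is where $q \leq m+1$, resp.\ $q \leq 3$, enters) to pass from the $L^{m+1}_t(M^s_{\cdot,q})$ norm to a Planchon-type norm controlled by the fixed-point argument. However, there is a genuine gap in the nonlinear estimate. You invoke the Banach-algebra property of $M_{2,q}^s$ to get $\norm{f(u(\tau))}_{M_{2,q}^s} \lesssim \norm{u(\tau)}_{M_{2,q}^s}^{m+1}$, which keeps the spatial Lebesgue exponent at $2$ and leads you to the norm $\norm{u}_{l^{s,q}_\Box(L^{m+1}_t L^2_x)}$. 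This norm cannot be controlled by $\norm{u}_X$: the pair $(\rho,\sigma) = (m+1, 2)$ violates the sub-admissibility condition \eqref{eqn:subadmissible}, since $\frac{2}{m+1} + (d - \iv{c_\gamma})\iv{2} > (d - \iv{c_\gamma})\iv{2}$ for every finite $m$, so no Strichartz estimate applies; and interpolation between the two components $l^{s,q}_\Box(L^\infty L^2)$ and $l^{s,q}_\Box(L^r L^p)$ of $X$ cannot produce a finite time exponent together with spatial exponent $2$ unless $p = 2$, which is excluded by \eqref{eqn:space_range}. You correctly flag this step as ``the main obstacle,'' but it is not a technicality to be re-run --- with your choice of exponents it fails.

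The repair, which is what the paper does, is to \emph{not} use the algebra property but the H\"older-like inequality \eqref{eqn:hoelder_mod} with all factors placed in $M^s_{2(m+1), q}$, giving $\norm{\pi^{m+1}(u(\tau))}_{M_{2,q}^s} \lesssim \norm{u(\tau)}_{M^s_{2(m+1), q}}^{m+1}$. Then Lemma \ref{lem:minkowski} (requiring $q \leq m+1$) bounds $\norm{u}_{L^{m+1}_t(M^s_{2(m+1),q})}$ by $\norm{u}_{l^{s,q}_\Box(L^{m+1}_t L^{2(m+1)}_x)}$, and the pair $(\rho, \sigma) = (m+1, 2(m+1))$ \emph{does} satisfy \eqref{eqn:subadmissible}, precisely because $m \geq m_0 \geq \frac{4}{d - \iv{c_\gamma}}$; Lemma \ref{lem:bernstein} plus the Strichartz estimates then give finiteness. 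Raising the spatial exponent from $2$ to $2(m+1)$ is exactly what trades the impossible pair for a sub-admissible one, and it is also where the hypothesis $m \geq m_0$ (rather than only $q \leq m+1$) is consumed. The same correction applies termwise in the exponential case.
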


\section{Preliminaries}
We start by the following observation.
\begin{lem}
\label{lem:minkowski}
Let $l, p, q \in [1, \infty]$ with $q \leq l$ and $s \in \R$. Then
\begin{equation*}
l^{s, q}_\Box(L^l(\R, L^p(\R^d)) \hookrightarrow
L^l(\R, M_{p, q}^s(\R^d)).
\end{equation*}
\end{lem}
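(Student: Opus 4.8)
The plan is to recognise both sides as iterated mixed norms of one and the same nonnegative scalar function and then to interchange the order of the $t$-norm and the $k$-norm by Minkowski's integral inequality, whose admissibility is exactly the hypothesis $q \leq l$. So the proof is essentially bookkeeping followed by a single, classical inequality applied in the correct direction.

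Concretely, for $u$ in the left-hand space and each $k \in \Z^d$ I would set $F_k(t) \coloneqq \jb{k}^s \norm{(\Box_k u)(t, \cdot)}_{L^p_x}$. By the very definition of the mixed-norm space $L^l_t L^p_x$, each $\Box_k u$ is the class of a measurable function for which $t \mapsto \norm{(\Box_k u)(t, \cdot)}_{L^p_x}$ is measurable, so every $F_k$ is a well-defined nonnegative measurable function on $\R$. Unwinding \eqref{eqn:planchon_norm}, the left-hand norm is $\norm{u}_{l^{s, q}_\Box(L^l L^p)} = \norm{\left(\norm{F_k}_{L^l_t}\right)_k}_{l^q(\Z^d)}$; that is, the $t$-norm is taken innermost and the $k$-norm outermost. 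Unwinding \eqref{eqn:modnorm} inside the outer $L^l_t$-norm, the right-hand norm is $\norm{u}_{L^l_t M^s_{p, q}} = \norm{\norm{(F_k(t))_k}_{l^q(\Z^d)}}_{L^l_t}$, where now the $k$-norm is innermost and the $t$-norm outermost. Thus the claimed embedding is precisely the inequality obtained by swapping the two norms.

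That swap is Minkowski's integral inequality for the two measure spaces $(\R, \D t)$ and $\Z^d$ (with counting measure) and exponents $l$ and $q$: for $1 \leq q \leq l \leq \infty$ and any nonnegative measurable family $(F_k)$ one has
\begin{equation*}
\norm{\norm{(F_k)_k}_{l^q(\Z^d)}}_{L^l_t} \leq \norm{\left(\norm{F_k}_{L^l_t}\right)_k}_{l^q(\Z^d)},
\end{equation*}
the point being that moving the smaller exponent $q$ to the outside can only increase the value. Applying this to the family $(F_k)$ introduced above yields $\norm{u}_{L^l_t M^s_{p, q}} \leq \norm{u}_{l^{s, q}_\Box(L^l L^p)}$, which is the asserted continuous embedding, with operator norm at most $1$.

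I do not expect a genuine obstacle here. The only points that deserve a word are the boundary exponents $l = \infty$ or $q = \infty$, which are covered by the usual formulation of Minkowski's inequality with the essential supremum in place of the integral, and the measurability of $t \mapsto F_k(t)$, which is immediate from the definition of the mixed-norm spaces. The essential content is simply choosing the correct direction of the norm exchange: the hypothesis $q \leq l$ is exactly what makes the inequality point the right way.
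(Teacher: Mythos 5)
Your proof is correct and is exactly the paper's argument: the paper's one-line proof simply invokes Minkowski's integral inequality for $L^l$ and $l^{s,q}$, and you have spelled out the same norm-interchange (with the hypothesis $q \leq l$ giving the correct direction) in full detail.
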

\begin{proof}
This is just an application of Minkowski’s integral inequality to $L^l$
and $l^{s, q}$.
\end{proof}
Let us denote by $W(t)$ the \emph{free Schrödinger propagator with
higher-order anisotropic dispersion} at time $t \in \R$, i.e.
\begin{equation}
W(t) =
\FT^{-1} e^{\iu \left(
\alpha \abs{\xi}^2 + \beta \xi_1^3 + \gamma \xi_1^4
\right) t} \FT.
\end{equation}
We cite the \emph{Strichartz estimates} for this propagator from
\cite[Theorem 1.2]{bouchel2008}. Let us remark that the dispersive
estimate (from which the Strichartz estimates follow), was obtained in
\cite{ben2000} for the isotropic case. Strichartz estimates hold
for the so-called (dually) \emph{admissible pairs}. We call the pair
$(p, r) \in [2, \infty] \times [2, \infty]$ admissible, if
\begin{equation}
\label{eqn:admissible}
\frac{2}{r} + \left(d - \iv{c_\gamma} \right) \iv{p} =
\left(d - \iv{c_\gamma} \right) \iv{2}, 
\end{equation}

\begin{thm}[Strichartz estimates]
\label{thm:strichartz}
Let $(p, r)$ and $(\tilde{p}', \tilde{r}')$ be admissible. For any
$u_0 \in L^2(\R^d)$ one has the \emph{homogeneous Strichartz estimate}
\begin{equation}
\label{eqn:homogeneous_strichartz}
\norm{W(t) u_0}_{L_t^r L^p_x} \lesssim \norm{u_0}_{L^2}
\end{equation}
and for any $F \in L^{\tilde{r}}_t L^{\tilde{p}}_x$ one has the
\emph{inhomogeneous Strichartz estimate}
\begin{equation}
\label{eqn:inhomogeneous_strichartz}
\norm{\int_0^t W(t - \tau) F(\tau, \cdot) \D{\tau}}_{L_t^r L_x^p}
\lesssim
\norm{F}_{L_t^{\tilde{r}} L_x^{\tilde{p}}}
\end{equation}
with implicit constants independent of $u_0$ and $F$.
\end{thm}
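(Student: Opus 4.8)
The plan is to deduce both Strichartz estimates from a single dispersive estimate for $W(t)$ together with the abstract $TT^*$ machinery of Keel and Tao. The key point is that the admissibility condition \eqref{eqn:admissible} is exactly the Keel--Tao condition for the \emph{effective dimension} $n \coloneqq d - \iv{c_\gamma}$; once $W(t) \colon L^1 \to L^\infty$ is shown to be bounded with norm $\lesssim \abs{t}^{-n/2}$, the full range of admissible estimates \eqref{eqn:homogeneous_strichartz} and \eqref{eqn:inhomogeneous_strichartz}, including the endpoint, follows from the standard argument using unitarity of $W(t)$ on $L^2$.

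Accordingly, the first and main step is the dispersive estimate
\begin{equation*}
\norm{W(t) f}_\infty \lesssim \abs{t}^{-\frac{n}{2}} \norm{f}_1,
\qquad n = d - \iv{c_\gamma}.
\end{equation*}
Since the symbol $\alpha \abs{\xi}^2 + \beta \xi_1^3 + \gamma \xi_1^4$ splits additively into a part depending only on the transverse frequency $\xi' \in \R^{d - 1}$ and a part depending only on $\xi_1$, the integral kernel of $W(t)$ factorizes as a product of the classical free Schrödinger kernel on $\R^{d - 1}$, contributing the decay $\abs{t}^{-(d - 1)/2}$, and the one-dimensional oscillatory integral $\int_\R e^{\iu (\alpha \xi_1^2 + \beta \xi_1^3 + \gamma \xi_1^4) t + \iu x_1 \xi_1} \, \D \xi_1$. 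For the latter I would apply van der Corput's lemma: if $\gamma \neq 0$ the fourth derivative of the phase equals $24 \gamma t$ and yields decay $\abs{t}^{-1/4}$, whereas if $\gamma = 0$ (forcing $\beta \neq 0$) the third derivative equals $6 \beta t$ and yields $\abs{t}^{-1/3}$. In either case this is $\abs{t}^{-1/(2 c_\gamma)}$, and the product of the two factors is precisely $\abs{t}^{-n/2}$.

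The remaining steps are routine. Interpolating the dispersive estimate against the $L^2$-isometry $\norm{W(t) f}_2 = \norm{f}_2$ gives $\norm{W(t) f}_p \lesssim \abs{t}^{-n (\iv{2} - \iv{p})} \norm{f}_{p'}$ for $p \in [2, \infty]$, and inserting this decay into the Keel--Tao theorem produces both Strichartz estimates for all admissible pairs. I expect the only genuine work to be the one-dimensional oscillatory integral, where the estimate must hold uniformly in $x_1$ over the unbounded line $\R$; this is manageable because the decisive derivative $\phi^{(c_\gamma + 1)}$ of the phase is independent of $x_1$ and bounded below by a fixed multiple of $\abs{t}$, so van der Corput applies with an $x_1$-uniform constant. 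This dispersive estimate is the content worked out in \cite{ben2000} for the isotropic case and in \cite{bouchel2008} in general, from which the statement is cited.
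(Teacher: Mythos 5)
The paper does not prove this theorem at all: it is quoted from \cite[Theorem~1.2]{bouchel2008}, with the remark that the underlying dispersive estimate was obtained in \cite{ben2000} in the isotropic case. Your sketch reconstructs exactly the argument behind that citation --- tensor-factorize the kernel of $W(t)$ into the $(d-1)$-dimensional free Schr\"odinger kernel and a one-dimensional oscillatory integral in $\xi_1$, control the latter by van der Corput uniformly in $x_1$, and feed the resulting decay $\abs{t}^{-n/2}$ with $n = d - \iv{c_\gamma}$ into the Keel--Tao/$TT^*$ machinery, for which the admissibility condition \eqref{eqn:admissible} is precisely the scaling relation with $\sigma = n/2$. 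The structure is sound and the case-by-case rates are correct: $\abs{t}^{-1/4}$ from the fourth derivative when $\gamma \neq 0$ and $\abs{t}^{-1/3}$ from the third derivative when $\gamma = 0$, and these do combine with $\abs{t}^{-(d-1)/2}$ to give $\abs{t}^{-n/2}$ in both cases. Two of your unifying formulas are, however, misstated. The one-dimensional rate is not $\abs{t}^{-1/(2c_\gamma)}$ in either case: the correct common expression is $\abs{t}^{-\frac{1}{2}\left(1 - \iv{c_\gamma}\right)}$, which equals $\iv{4}$ for $c_\gamma = 2$ (coincidentally agreeing with $\iv{2 c_\gamma}$) but $\iv{3}$, not $\iv{6}$, for $c_\gamma = 3$. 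Likewise the decisive derivative is not $\phi^{(c_\gamma + 1)}$ --- that would be the third derivative for $\gamma \neq 0$ and the fourth for $\gamma = 0$, i.e.\ exactly swapped; the correct order is $2 c_\gamma / (c_\gamma - 1)$, namely four and three respectively, as your case analysis in fact uses. Neither slip propagates into the conclusion. The only other point worth recording is that van der Corput is stated for compact intervals, so the integral over all of $\R$ requires a cutoff-and-limit argument, which is harmless precisely because the bound is independent of the interval, as you note.
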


Strichartz estimates on Lebesque spaces immediately translate to the
setting of modulation and Planchon-type spaces.
\begin{lem}
Let $(p, r)$ and  $(\tilde{p}', \tilde{r}')$ be admissible. Moreover,
let $q \in [1, \infty]$ and $s \in \R$. For any
$u_0 \in M_{p, q}^s$ one has
\begin{equation}
\label{eqn:hom_strichartz_planchon}
\norm{W(t) u_0}_{l^{s, q}_\Box(L^r_t L^p_x)} \lesssim
\norm{u_0}_{M_{2, q}^s}
\end{equation}
and for any
$F \in l^{s, q}_\Box(L^{\tilde{r}}(\R, L^{\tilde{p}}(\R^d)))$ one has
\begin{equation}
\label{eqn:inhom_strichartz_planchon}
\norm{\int_0^t W(t - \tau) F(\tau, \cdot) \D{\tau}}_{
l^{s, q}_\Box(L^r_t L^p_x)}
\lesssim
\norm{F}_{
l^{s, q}_\Box(L^{\tilde{r}} L^{\tilde{p}})}
\end{equation}
with implicit constants independent of $u_0$ and $F$.
\end{lem}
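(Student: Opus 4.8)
The plan is to reduce both estimates to the Lebesgue-space Strichartz estimates of Theorem~\ref{thm:strichartz} applied frequency box by frequency box. The single observation that makes this work is that every box operator $\Box_k = \FT^{-1}\sigma_k\FT$ is a Fourier multiplier acting in the space variable only, and so is the propagator $W(t)$; consequently the two commute, $\Box_k W(t) = W(t)\Box_k$, and $\Box_k$ likewise commutes with $W(t-\tau)$ and with the time integral $\int_0^t \cdot \,\D \tau$ appearing in Duhamel's formula. All three commutations are immediate on the Fourier side, since the symbols simply multiply.

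First I would treat the homogeneous bound \eqref{eqn:hom_strichartz_planchon}. Unfolding the Planchon norm via \eqref{eqn:planchon_norm} and commuting the box operator past the propagator gives, for each $k$,
\[
\norm{\Box_k W(t) u_0}_{L^r_t L^p_x} = \norm{W(t)\,\Box_k u_0}_{L^r_t L^p_x}.
\]
The finiteness of the right-hand side $\norm{u_0}_{M_{2,q}^s}$ guarantees $\Box_k u_0 \in L^2(\R^d)$ for every $k$, so the homogeneous estimate \eqref{eqn:homogeneous_strichartz} applies to the datum $\Box_k u_0$ and bounds the above by $\norm{\Box_k u_0}_{L^2}$, with an implicit constant that Theorem~\ref{thm:strichartz} asserts is independent of the datum, hence uniform in $k$. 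Multiplying by $\jb{k}^s$ and taking the $l^q(\Z^d)$ norm then reproduces exactly the modulation norm \eqref{eqn:modnorm} of $u_0$ on the right, which is \eqref{eqn:hom_strichartz_planchon}.

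The inhomogeneous bound \eqref{eqn:inhom_strichartz_planchon} follows by the identical scheme. Commuting $\Box_k$ inside the Duhamel integral yields
\[
\Box_k \int_0^t W(t-\tau) F(\tau,\cdot)\,\D \tau = \int_0^t W(t-\tau)\,\Box_k F(\tau,\cdot)\,\D \tau,
\]
to which the inhomogeneous Strichartz estimate \eqref{eqn:inhomogeneous_strichartz} applies box by box, producing $\norm{\Box_k F}_{L^{\tilde r}_t L^{\tilde p}_x}$ with a constant independent of $k$. Weighting by $\jb{k}^s$ and summing in $l^q$ then rebuilds the Planchon norm of $F$ and gives the claim.

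There is no genuine analytic difficulty in this lemma; its whole content is the commutation of the box operators with the propagator, which is what permits the estimate to be run one box at a time. The point that must be checked rather than assumed --- and in that sense the hinge of the argument --- is the uniformity of the Strichartz constant across boxes. This is exactly what the independence of the implicit constants from $u_0$ and $F$ in Theorem~\ref{thm:strichartz} supplies: the same constant then serves for every $\Box_k u_0$ and every $\Box_k F$, so that the weighted $l^q$ summation passes through with no $k$-dependent loss. Were the Strichartz constant permitted to grow with the frequency support, this final summation would break down, so recording its uniformity is the one thing the proof should not leave implicit.
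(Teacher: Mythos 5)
Your proposal is correct and is precisely the argument the paper intends: the paper's entire proof is the remark that the estimates follow from Theorem~\ref{thm:strichartz} together with the commutation of $W(t)$ and $\Box_k$, which is exactly the box-by-box reduction you carry out. Your additional emphasis on the uniformity of the Strichartz constant across boxes is a correct and worthwhile point that the paper leaves implicit.
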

\begin{proof}
The proof follows from Theorem \ref{thm:strichartz} and the
fact that $W(t)$ and $\Box_k$ commute.
\end{proof}

\begin{lem}[Hölder-like inequalities]
\label{lem:hoelderlike}
Let $d, n \in \N$ and
$\tilde{p}, \tilde{p}_1, \ldots. \tilde{p}_n, q, \tilde{r},
\tilde{r}_1, \ldots, \tilde{r}_n \in [1, \infty]$ be such
that
\begin{equation*}
\iv{\tilde{p}} = \iv{\tilde{p}_1} + \cdots + \iv{\tilde{p}_n}, \qquad
\iv{\tilde{r}} = \iv{\tilde{r}_1} + \cdots + \iv{\tilde{r}_n}.
\end{equation*}
For $q > 1$ let $s > d \left(1 - \iv{q} \right)$ and for $q = 1$ let
$s \geq 0$.
Then, for any
\begin{equation*}
(f_1, \ldots, f_n) \in
l_\Box^{s, q}(L^{\tilde{r}_1}(\R, L^{\tilde{p}_1}(\R^d))
\times \cdots \times
l_\Box^{s, q}(L^{\tilde{r}_n}(\R, L^{\tilde{p}_n}(\R^d))
\end{equation*}
one has
\begin{equation}
\label{eqn:hoelderlike}
\norm{\prod_{j = 1}^n f_j}_{
l_\Box^{s, q}(L^{\tilde{r}} L^{\tilde{p}})}
\lesssim
\prod_{j = 1}^n\norm{f_j}_{
l_\Box^{s, q}\left(L^{\tilde{r}_j} L^{\tilde{p}_j}\right)}
\end{equation}
with an implicit constant independent of $(f_1. \ldots, f_n)$.

Similarly, for
\begin{equation*}
(g_1, \ldots, g_n) \in
M^s_{\tilde{p}_1, q}(\R^d)
\times \cdots \times
M^s_{\tilde{p}_n, q}(\R^d)
\end{equation*}
one has
\begin{equation}
\label{eqn:hoelder_mod}
\norm{\prod_{j = 1}^n g_j}_{
M^s_{\tilde{p}, q}}
\lesssim
\prod_{j = 1}^n \norm{g_j}_{
M^s_{\tilde{p}_j, q}}
\end{equation}
with an implicit constant independent of $(g_{1},\ldots,g_{n})$.
\end{lem}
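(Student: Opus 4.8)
The plan is to reduce both inequalities \eqref{eqn:hoelderlike} and \eqref{eqn:hoelder_mod} to a single weighted convolution estimate on the lattice $\Z^d$ and to prove that estimate by hand. The driving mechanism is the elementary but crucial fact that the Fourier support of a product is the vector sum of the individual Fourier supports, so that on the frequency side multiplication of functions corresponds to convolution of their frequency decompositions. Since each $\Box_{k}$ localises to the ball $B_{\sqrt{d}}(k)$, only lattice points whose sum $\sum_j k_j$ lies close to $k$ can contribute to $\Box_k(\prod_j f_j)$, and the whole argument is organised around this almost-orthogonality.

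First I would set $a^{(j)}_k \coloneqq \norm{\Box_k f_j}_{L^{\tilde r_j}_t L^{\tilde p_j}_x}$ and decompose $\prod_j f_j = \sum_{k_1,\ldots,k_n} \prod_j \Box_{k_j} f_j$. Because $\supp \FT(\Box_{k_j} f_j) \subseteq B_{\sqrt{d}}(k_j)$, the summand $\prod_j \Box_{k_j} f_j$ has Fourier support in $B_{n\sqrt{d}}\bigl(\sum_j k_j\bigr)$, so there is a constant $C = C(n,d)$ with $\Box_k \prod_j \Box_{k_j} f_j = 0$ unless $\abs{k - \sum_j k_j} \le C$. Using that $\Box_k$ is bounded on $L^{\tilde p}_x$ uniformly in $k$ (Young's inequality, since the kernels $\FT^{-1}\sigma_k$ have uniformly bounded $L^1$-norms) together with the mixed-norm Hölder inequality corresponding to the hypotheses $\iv{\tilde p} = \sum_j \iv{\tilde p_j}$ and $\iv{\tilde r} = \sum_j \iv{\tilde r_j}$, I obtain the pointwise-in-$k$ bound
\begin{equation*}
\norm{\Box_k \prod_{j=1}^n f_j}_{L^{\tilde r}_t L^{\tilde p}_x}
\lesssim \sum_{\abs{k - \sum_j k_j} \le C} \prod_{j=1}^n a^{(j)}_{k_j}
= \sum_{\abs{m-k} \le C} (a^{(1)} * \cdots * a^{(n)})_m ,
\end{equation*}
where $*$ denotes convolution on $\Z^d$. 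Since $s \ge 0$ and $\abs{m-k} \le C$ force $\jb{k}^s \lesssim \jb{m}^s$, and since convolution with the indicator of a bounded set is bounded on $l^q$, taking the $l^q$-norm in $k$ reduces the claim to the weighted convolution inequality
\begin{equation*}
\norm{\bigl(\jb{m}^s (a^{(1)} * \cdots * a^{(n)})_m\bigr)_m}_{l^q}
\lesssim \prod_{j=1}^n \norm{\bigl(\jb{k}^s a^{(j)}_k\bigr)_k}_{l^q} .
\end{equation*}

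By induction it suffices to treat $n = 2$, that is, to show that $l^q(\Z^d)$ with weight $\jb{\cdot}^s$ is a convolution algebra; this is the main obstacle. Writing $A_k = \jb{k}^s \abs{a_k}$ and $B_k = \jb{k}^s \abs{b_k}$ and splitting the convolution sum according to whether $\abs{j} \ge \abs{k}/2$ or $\abs{j} < \abs{k}/2$, the Peetre-type inequality $\jb{k}^s \lesssim \jb{k-j}^s \jb{j}^s$ (valid for $s \ge 0$) yields $\jb{k}^s \abs{(a*b)_k} \lesssim (\abs{a} * B)_k + (A * \abs{b})_k$. Taking $l^q$-norms and applying Young's inequality in the form $\norm{f*g}_{l^q} \le \norm{f}_{l^1}\norm{g}_{l^q}$ gives a bound by $\norm{a}_{l^1}\norm{B}_{l^q} + \norm{A}_{l^q}\norm{b}_{l^1}$. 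Here the hypothesis on $s$ enters decisively: it is exactly equivalent to $\jb{\cdot}^{-s} \in l^{q'}(\Z^d)$ (namely $s \ge 0$ when $q = 1$, and $s > d(1 - \iv{q}) = d/q'$ when $q > 1$), so Hölder's inequality gives $\norm{a}_{l^1} = \sum_k \jb{k}^{-s}\bigl(\jb{k}^s\abs{a_k}\bigr) \le \norm{\jb{\cdot}^{-s}}_{l^{q'}} \norm{A}_{l^q}$, and likewise for $b$. This closes the two-factor estimate, and induction delivers the $n$-factor version, hence \eqref{eqn:hoelderlike}.

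Finally, the modulation-space estimate \eqref{eqn:hoelder_mod} is the special case of the same argument with the time variable absent: one replaces $a^{(j)}_k = \norm{\Box_k f_j}_{L^{\tilde r_j}_t L^{\tilde p_j}_x}$ by $\norm{\Box_k g_j}_{L^{\tilde p_j}}$ and repeats the support localisation, the spatial Hölder inequality, and the very same weighted convolution estimate verbatim. I expect no new difficulty there, so the crux of the entire lemma is the convolution-algebra property of the weighted space $l^q(\Z^d)$ and, within it, the sharp use of the condition $s > d(1 - \iv{q})$ to secure the embedding into $l^1$.
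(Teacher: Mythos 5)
Your proof is correct, but it is genuinely more self-contained than what the paper does: the paper's entire proof consists of citing the bilinear modulation-space estimate from Theorem~4.3 of the reference \cite{chaichenets2018} and remarking that the Planchon-type version follows by additionally applying H\"older's inequality in the time variable. You instead supply the full standard argument from scratch --- the sumset property of Fourier supports forcing $\Box_k \prod_j \Box_{k_j} f_j = 0$ unless $\abs{k - \sum_j k_j} \leq C(n,d)$, the uniform $L^p$-boundedness of the $\Box_k$, mixed-norm H\"older, and then the reduction to the convolution-algebra property of $\jb{\cdot}^s$-weighted $\ell^q(\Z^d)$ via Peetre's inequality and the observation that $s > d(1-\iv{q})$ is exactly the condition $\jb{\cdot}^{-s}\in \ell^{q'}(\Z^d)$ needed to embed the weighted $\ell^q$ into $\ell^1$. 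This is almost certainly the content of the cited theorem, so the underlying mechanism is the same; what your version buys is a direct treatment of general $n$ and of the time variable in one pass, and an explicit identification of where the hypothesis on $s$ is used, whereas the paper's route buys brevity. Two points you gloss over are standard but worth a sentence each in a final write-up: the uniform bound $\sup_k \norm{\FT^{-1}\sigma_k}_{L^1} < \infty$ requires the partition of unity to be uniformly controlled (e.g.\ $\sigma_k = \sigma_0(\cdot - k)$, which is the usual convention and harmless since different partitions give equivalent norms), and the interchange of $\Box_k$ with the sum $\sum_{k_1,\ldots,k_n}$ needs the usual approximation/duality justification in $S'$.
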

\begin{proof}
The special case of \eqref{eqn:hoelder_mod} for $n = 2$ is proven in
\cite[Theorem 4.3]{chaichenets2018}. That proof is easily
transferred to the case of Planchon-type spaces applying Hölder’s
inequality for the time variable, also.
\end{proof}

\begin{lem}
\label{lem:bernstein}
Let $p_1, p_2, q, r \in [1, \infty]$ with $p_1 \leq p_2$ and $s \in \R$.
Then
\begin{equation}
\label{eqn:planchon_embedding}
l^{s, q}_\Box(L^r(\R, L^{p_1}(\R^d))) \hookrightarrow
l^{s, q}_\Box(L^r(\R, L^{p_2}(\R^d))).
\end{equation}
\end{lem}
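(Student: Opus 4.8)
The plan is to run a Bernstein-type argument that exploits the single structural feature we need: each box operator $\Box_k = \FT^{-1}\sigma_k\FT$ localizes the spatial Fourier transform to the ball $B_{\sqrt{d}}(k)$, whose \emph{radius} $\sqrt{d}$ is independent of $k$. Consequently, for each fixed $t$, the function $(\Box_k u)(t, \cdot)$ is band-limited, and the passage from $L^{p_1}_x$ to $L^{p_2}_x$ for $p_1 \leq p_2$ costs only a constant that, by translation invariance, will not depend on $k$. Since the Planchon-type norm is built out of the individual quantities $\jb{k}^s \norm{(\Box_k u)(t,x)}_{L^r_t L^{p_1}_x}$, it suffices to estimate each $\Box_k u$ separately; no summation over $k$ is involved at this stage.

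First I would fatten the multiplier in a $k$-uniform way. Fix once and for all a function $\tilde\sigma_0 \in C_c^\infty(\R^d)$ with $\tilde\sigma_0 \equiv 1$ on $B_{\sqrt{d}}(0)$, and set $\tilde\sigma_k \coloneqq \tilde\sigma_0(\cdot - k)$. Because $\supp(\sigma_k) \subseteq B_{\sqrt{d}}(k)$, we have $\tilde\sigma_k \sigma_k = \sigma_k$, so that for every $k$ and every fixed $t$ the reproducing identity $(\Box_k u)(t, \cdot) = K_k * (\Box_k u)(t, \cdot)$ holds, where the convolution is taken in $x$ and $K_k \coloneqq \FT^{-1}\tilde\sigma_k$. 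The decisive point is that $\tilde\sigma_k$ is a pure translate of $\tilde\sigma_0$, whence $\abs{K_k} = \abs{K_0}$ pointwise and $\norm{K_k}_{L^a} = \norm{K_0}_{L^a}$ for every $a \in [1, \infty]$; moreover $K_0$ is a Schwartz function, so these norms are finite.

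Next I would apply Young's convolution inequality pointwise in $t$. For $p_1 \leq p_2$ choose $a \in [1, \infty]$ with $1 + \iv{p_2} = \iv{a} + \iv{p_1}$, which is admissible since $\iv{p_1} - \iv{p_2} \in [0, 1]$. This yields the estimate $\norm{(\Box_k u)(t, \cdot)}_{p_2} \leq \norm{K_k}_{L^a} \norm{(\Box_k u)(t, \cdot)}_{p_1} = C \norm{(\Box_k u)(t, \cdot)}_{p_1}$ with $C \coloneqq \norm{K_0}_{L^a}$ independent of both $t$ and $k$. Raising this to the $r$-th power and integrating in $t$ gives $\norm{\Box_k u}_{L^r_t L^{p_2}_x} \leq C \norm{\Box_k u}_{L^r_t L^{p_1}_x}$; finally I would multiply by $\jb{k}^s$ and take the $l^q$ norm over $k \in \Z^d$ to obtain the claimed embedding, the implicit constant being exactly $C$.

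The only subtlety worth flagging is the uniformity of the Bernstein constant in $k$, which is precisely why the argument is organized around the translated family $(\tilde\sigma_k)_k$ rather than around $k$-dependent cutoffs: once $\norm{K_k}_{L^a}$ is seen to be constant in $k$, the remaining steps are routine applications of Young's and Minkowski's inequalities, and no cancellation between different frequency blocks is required.
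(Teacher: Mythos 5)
Your proof is correct and takes essentially the same route as the paper: the paper reduces the claim blockwise to Bernstein's multiplier estimate (which it cites), and you simply supply the standard proof of that estimate via the fattened translated symbol $\tilde\sigma_k$, the reproducing identity $\Box_k u = K_k * \Box_k u$, and Young's inequality, together with the (correct) observation that $\abs{K_k} = \abs{K_0}$ makes the constant uniform in $k$. No gap.
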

\begin{proof}
The conclusion follows immediately from the definition of the norm and
Bernstein’s multiplier estimate (see e.g.
\cite[Corollary A.53]{chaichenets2018}).
\end{proof}

\begin{cor}
Let $d \in \N$, $m \in \N_0$ and $l \in \set{0, \ldots, m}$. Moreover,
let $\tilde{p}, q, \tilde{r} \in [1, \infty]$. For $q > 1$ let
$s > d \left(1 - \iv{q}\right)$ and for $q = 1$ let $s \geq 0$. Then
\begin{eqnarray}
\nonumber
\norm{\pi^{m + 1}(u) - \pi^{m + 1}(v)
}_{l^{s, q}_\Box (L^{\tilde{r}} L^{\tilde{p}})}
\label{eqn:power_lipschitz}
& \lesssim &
\norm{u - v}_{
l^{s, q}_\Box(L^{(l + 1) \tilde{r}} L^{(l + 1) \tilde{p}})} \\
& & \cdot
\left(
\norm{u}_{l^{s, q}_\Box \left(L^{(l + 1) \tilde{r}} L^{(l + 1) \tilde{p}}\right)}^l
\norm{u}_{l^{s, q}_\Box (L^\infty L^2)}^{m - l} \right. \\
\nonumber
& & +
\left. \norm{v}_{l^{s, q}_\Box (L^{(l + 1) \tilde{r}} L^{(l + 1) \tilde{p}})}^l
\norm{v}_{l^{s, q}_\Box (L^\infty L^2)}^{m - l}
\right)
\end{eqnarray}
for any $u, v \in S'(\R^{d + 1})$, where the implicit constant is
independent of $(u, v)$.
\end{cor}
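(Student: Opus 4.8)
The plan is to reduce \eqref{eqn:power_lipschitz} to the multilinear Hölder-type inequality \eqref{eqn:hoelderlike} of Lemma~\ref{lem:hoelderlike} by means of the elementary telescoping identity for a difference of products. Writing $\pi^{m+1}(w) = \prod_{j=1}^{m+1} w^{(j)}$, where each factor $w^{(j)}$ is a copy of $w$ or of $\bar w$ dictated by the fixed sign pattern of $\pi^{m+1}$, I would use
\begin{equation*}
\pi^{m+1}(u) - \pi^{m+1}(v) = \sum_{k=1}^{m+1} u^{(1)} \cdots u^{(k-1)} \left( u^{(k)} - v^{(k)} \right) v^{(k+1)} \cdots v^{(m+1)},
\end{equation*}
in which each difference $u^{(k)} - v^{(k)}$ equals $u - v$ or $\overline{u - v}$. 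Taking the partition of unity to be symmetric, the box operators intertwine complex conjugation with the frequency reflection $k \mapsto -k$, under which every Planchon-type norm occurring below is invariant; this lets me replace each factor norm by the corresponding norm of $u$, $v$, or $u - v$ and disregard which factors are conjugated.

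For a single summand I would split its $m+1$ factors into two groups: the difference factor together with $l$ of the remaining $m$ factors form the first group, to be measured in $l^{s,q}_\Box(L^{(l+1)\tilde{r}} L^{(l+1)\tilde{p}})$, while the other $m - l$ factors form the second group, to be measured in $l^{s,q}_\Box(L^\infty L^\infty)$. These exponents are admissible for Lemma~\ref{lem:hoelderlike}: the $l+1$ reciprocals $\tfrac{1}{(l+1)\tilde{r}}$ sum to $\tfrac{1}{\tilde{r}}$ while the remaining $m - l$ reciprocals vanish, and the same holds in the space variable, so the balance conditions $\sum_j \tfrac{1}{\tilde{r}_j} = \tfrac{1}{\tilde{r}}$ and $\sum_j \tfrac{1}{\tilde{p}_j} = \tfrac{1}{\tilde{p}}$ are met; moreover the hypothesis on $s$ (namely $s \geq 0$ for $q = 1$ and $s > d/q'$ for $q > 1$) is exactly the one Lemma~\ref{lem:hoelderlike} requires. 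Applying it bounds the summand by $\norm{u - v}_{l^{s,q}_\Box(L^{(l+1)\tilde{r}} L^{(l+1)\tilde{p}})}$ times a product of $l$ norms in $l^{s,q}_\Box(L^{(l+1)\tilde{r}} L^{(l+1)\tilde{p}})$ and $m - l$ norms in $l^{s,q}_\Box(L^\infty L^\infty)$. Invoking Bernstein's embedding (Lemma~\ref{lem:bernstein}, with $2 \leq \infty$) on the second group then replaces each $L^\infty L^\infty$ norm by the corresponding $L^\infty L^2$ norm, producing factors of exactly the shape appearing in \eqref{eqn:power_lipschitz}.

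It remains to sum the $m+1$ summands and to recognise the result as the right-hand side of \eqref{eqn:power_lipschitz}. After the previous step each summand is dominated by $\norm{u - v}_{l^{s,q}_\Box(L^{(l+1)\tilde{r}} L^{(l+1)\tilde{p}})}$ times a monomial of degree $l$ in the two $l^{s,q}_\Box(L^{(l+1)\tilde{r}} L^{(l+1)\tilde{p}})$-norms of $u$ and $v$ and of degree $m - l$ in the two $l^{s,q}_\Box(L^\infty L^2)$-norms of $u$ and $v$. I expect this last bookkeeping to be the only delicate point. The endpoints $k = 1$ and $k = m+1$ reproduce exactly the two pure monomials $\norm{v}^l\,\norm{v}^{m-l}$ and $\norm{u}^l\,\norm{u}^{m-l}$ of \eqref{eqn:power_lipschitz}, whereas an intermediate $k$ yields a genuinely mixed monomial in the norms of $u$ and of $v$. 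To reach the stated two–term form I would distribute the non-difference factors of each intermediate summand between the two groups as evenly as the integer constraints permit and then invoke the weighted arithmetic–geometric mean inequality to dominate each mixed monomial by the sum of the two pure ones, absorbing the finitely many summands into the implicit constant. This reduction from the mixed to the two pure monomials is the step that must be carried out with care, and completing it closes the estimate.
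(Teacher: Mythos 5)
Your route is genuinely different from the paper's: you expand $\pi^{m+1}(u)-\pi^{m+1}(v)$ by the full telescoping identity and estimate each of the $m+1$ summands in one shot with the multilinear Hölder-like inequality \eqref{eqn:hoelderlike}, whereas the paper inducts on $m$ using only the two-term splitting $\pi^{m+1}(u)-\pi^{m+1}(v)=\pi^m(u)(u-v)+(\pi^m(u)-\pi^m(v))v$ and re-invokes the corollary for $\pi^m$ in the induction step. Up to the final bookkeeping both arguments rest on the same two ingredients (Lemma \ref{lem:hoelderlike} and Lemma \ref{lem:bernstein}), and your exponent accounting, the conjugation-invariance remark, and the passage from $L^\infty L^\infty$ to $L^\infty L^2$ via Bernstein are all correct.

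The gap is exactly where you suspected it. Write $Y=l^{s,q}_\Box(L^{(l+1)\tilde r}L^{(l+1)\tilde p})$ and $Z=l^{s,q}_\Box(L^\infty L^2)$. An intermediate summand produces a monomial $\norm{u}_Y^a\norm{v}_Y^{l-a}\norm{u}_Z^b\norm{v}_Z^{m-l-b}$ with $a+b=k-1$, and weighted AM--GM dominates this by $\norm{u}_Y^l\norm{u}_Z^{m-l}+\norm{v}_Y^l\norm{v}_Z^{m-l}$ only if the exponents are proportional, i.e. $a/l=b/(m-l)$; the integer constraint $a+b=k-1$ generally rules this out, and no choice of grouping avoids the problem. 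Indeed the desired domination is simply false for incomparable norms: already for $m=2$, $l=1$, $k=2$ every grouping yields $\norm{u}_Y\norm{v}_Z$ or $\norm{v}_Y\norm{u}_Z$, and an estimate such as $\norm{u}_Y\norm{v}_Z\le C\left(\norm{u}_Y\norm{u}_Z+\norm{v}_Y\norm{v}_Z\right)$ fails when $\norm{u}_Y\approx\norm{v}_Z\approx 1$ while $\norm{u}_Z,\norm{v}_Y\approx\varepsilon$ (achievable, e.g., by stretching or compressing the time support, which moves the $L^{(l+1)\tilde r}_t$-based norm but not the $L^\infty_t$-based one). What your telescoping argument honestly proves is the symmetric bound with $\left(\norm{u}_Y+\norm{v}_Y\right)^l\left(\norm{u}_Z+\norm{v}_Z\right)^{m-l}$ in place of the two pure monomials. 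That weaker form is all the contraction argument in Theorem \ref{thm:power} uses --- and if you trace the paper's own induction through the second summand of its splitting, it too generates mixed monomials and really only establishes the symmetric form. So the correct fix is to state and prove the symmetric bound, not to look for an AM--GM patch.
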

\begin{proof}
The base case $m = 0$ is trivial. For the induction step from
$m - 1$ to $m$ observe, that
\begin{eqnarray}
\nonumber
& & \norm{\pi^{m + 1}(u) - \pi^{m + 1}(v)
}_{l^{s, q}_\Box(L^{\tilde{r}} L^{\tilde{p}})} \\
\label{eqn:two_summands}
& \leq &
\norm{\pi^m(u)(u - v)
}_{l^{s, q}_\Box(L^{\tilde{r}} L^{\tilde{p}})} +
\norm{(\pi^m(u) - \pi^m(v))v
}_{l^{s, q}_\Box(L^{\tilde{r}} L^{\tilde{p}})}.
\end{eqnarray}
For the first summand one has, by the Hölder-like inequality
\eqref{eqn:hoelderlike},
\begin{eqnarray*}
& & \norm{\pi^m(u)(u - v)
}_{l^{s, q}_\Box(L^{\tilde{r}} L^{\tilde{p}})} \\
& \lesssim &
\norm{u - v}_{l^{s, q}_\Box(L^{(l + 1)\tilde{r}} L^{(l + 1) \tilde{p}})}
\norm{\pi^m(u)}_{l^{s, q}_\Box\left(L^{\frac{l + 1}{l}\tilde{r}}
L^{\frac{l + 1}{l} \tilde{p}}\right)}.
\end{eqnarray*}
For $l = 0$ the second factor above is estimated via
\eqref{eqn:hoelderlike} and \eqref{eqn:planchon_embedding} against
\begin{equation*}
\norm{\pi^m(u)}_{l^{s, q}_\Box\left(L^{\frac{l + 1}{l}\tilde{r}}
L^{\frac{l + 1}{l} \tilde{p}}\right)} \lesssim
\norm{u}_{l^{s, q}_\Box(L^\infty L^\infty)}^m \lesssim
\norm{u}_{l^{s, q}_\Box(L^\infty L^2)}^m.
\end{equation*}
For $l \geq 1$ one uses the induction hypothesis instead and arrives at
\begin{equation*}
\norm{\pi^m(u)}_{l^{s, q}_\Box\left(L^{\frac{l + 1}{l}\tilde{r}}
L^{\frac{l + 1}{l} \tilde{p}}\right)} \lesssim
\norm{u}_{l^{s, q}_\Box(L^{(l + 1) \tilde{r}} L^{(l + 1) \tilde{p}})}^l
\norm{u}_{l^{s, q}_\Box(L^\infty L^2)}^{m - l}.
\end{equation*}
The second summand in \eqref{eqn:two_summands} is treated via the same
methods. This concludes the proof.
\end{proof}

\section{Proofs of the results}
In this section we present the proofs of Theorem \ref{thm:power}, of
Theorem \ref{thm:exponential} and of Corollary \ref{cor:scattering}.

\begin{proof}[Proof of Theorem \ref{thm:power}]
For a $\delta > 0$, which will be fixed later, consider
\begin{equation}
\label{eqn:metric_space}
M(\delta) \coloneqq \setp{f \in X}{\norm{f}_X \leq \delta},
\end{equation}
where $X$ is given by \eqref{eqn:solspace} (the embedding claimed
there immediately follows from Lemma \ref{lem:minkowski} with
$l = \infty$ and $p = 2$). By Banach’s fixed-point theorem, it
suffices to show that the operator
\begin{equation}
\label{eqn:contraction}
u \mapsto \opT u \coloneqq
W(t) u_0 + \int_0^t W(t - \tau) \pi^{m + 1}(u(\tau)) \D{\tau}
\end{equation}
is a contractive self-mapping of $M(\delta)$ for some $\delta > 0$. We
only show the contractiveness of $\opT$, as the proof of the
self-mapping property follows along the same lines. Fix any
$u, v \in M(\delta)$.

By the definition of the norm $\norm{\cdot}_X$ one has
\begin{align*}
\norm{\opT(u) - \opT(v)}_X & =
\norm{\opT(u) - \opT(v)}_{l^{s, q}_\Box(L^\infty L^2)} +
\norm{\opT(u) - \opT(v)}_{l^{s, q}_\Box(L^r L^p)}.
\end{align*}
To apply Strichartz estimates we require spaces with admissible indices
in both summands. This is already the case for the first summand.
For the second summand, observe that Assumption \eqref{eqn:space_range}
implies
\begin{equation*}
p \geq p_\text{a} \coloneqq
\left(\iv{2} - \frac{2}{r \left(d - \iv{c_\gamma}\right)}\right)^{-1}
\end{equation*}
and hence, by Lemma \ref{lem:bernstein},
\begin{equation*}
\norm{\opT(u) - \opT(v)}_{
l^{s, q}_\Box(L^r L^p)} \lesssim
\norm{\opT(u) - \opT(v)}_{
l^{s, q}_\Box(L^r L^{p_{\text{a}})}}
\end{equation*}
follows. Due to $\iv{r} \in I_{m, d} \subseteq \left[0, \iv{2}\right]$,
the pair $(p_\text{a}, r)$ is indeed admissible.

By the inhomogeneous Strichartz estimate
\eqref{eqn:inhom_strichartz_planchon} we therefore have
\begin{eqnarray*}
\norm{\opT u - \opT v}_X & = &
\norm{\int_0^t W(t - \tau)
\left(\pi^{m + 1}(u(\tau)) - \pi^{m + 1}(v(\tau))\right) \D{\tau}}_X \\
& \lesssim &
\norm{\pi^{m + 1}(u) - \pi^{m + 1}(v)
}_{l^{s, q}_\Box(L^{\tilde{r}} L^{\tilde{p}})}
\end{eqnarray*}
for any pair $(\tilde{p}, \tilde{r}) \in [1, 2]$ such that
$\left(\tilde{p}', \tilde{r}'\right)$ is admissible, which we will fix
in the following. Observe, that
\begin{equation*}
I_{m, d} = \bigcup_{k = m_0}^m \left[\iv{2(k + 1)}, \iv{k + 1} \right]
\end{equation*}
and that the effective non-linearity $l$ satisfies
\begin{equation}
\label{eqn:maximal_effective_nonlinearity}
l = \max
\setp{k \in \set{m_0, \ldots, m}}
{\iv{r} \in \left[ \iv{2(k + 1)}, \iv{k + 1} \right]}.
\end{equation}
Hence, $\tilde{r} \coloneqq \frac{r}{l + 1} \in [1, 2]$ and thus
$(\tilde{p}', \tilde{r}')$, where
\begin{equation}
\label{eqn:dually_admissible}
\tilde{p} \coloneqq
\left(\iv{2} +
\frac{2 \left(1 - \iv{\tilde{r}} \right)}
{d - \iv{c_\gamma}}\right)^{-1},
\end{equation}
indeed form an admissible pair.

We now apply Estimate \eqref{eqn:power_lipschitz} to arrive at
\begin{eqnarray*}
& & \norm{\pi^{m + 1}(u) - \pi^{m + 1}(v)
}_{l^{s, q}_\Box(L^{\tilde{r}} L^{\tilde{p}})} \\
& \lesssim &
\norm{u - v}_{
l^{s, q}_\Box(L^r L^{(l + 1) \tilde{p}})} \\
& & \cdot
\left(
\norm{u}_{l^{s, q}_\Box \left(L^r L^{(l + 1) \tilde{p}}\right)}^l
\norm{u}_{l^{s, q}_\Box (L^\infty L^2)}^{m - l} +
\norm{v}_{l^{s, q}_\Box (L^r L^{(l + 1) \tilde{p}})}^l
\norm{v}_{l^{s, q}_\Box (L^\infty L^2)}^{m - l}
\right).
\end{eqnarray*}
A short calculation yields
\begin{equation*}
(l + 1) \tilde{p} =
\left(
\iv{2} - \frac{2}{r\left(d - \iv{c_\gamma}\right)} -
\iv{2(l + 1)} \left(l - \frac{4}{d - \iv{c_\gamma}} \right)
\right)^{-1},
\end{equation*}
and thus, due to Assumption \eqref{eqn:space_range},
$(l + 1) \tilde{p} \geq p$.
Therefore, invoking Embedding \eqref{eqn:planchon_embedding} once again,
one finally obtains
\begin{eqnarray*}
& &
\norm{\opT(u) - \opT(v)}_X \\
& \lesssim &
\norm{u - v}_{l^{s, q}_\Box(L^r L^p)} \\
& & \cdot
\left(\norm{u}_{l^{s, q}_\Box(L^r L^p)}^l
\norm{u}_{l^{s, q}_\Box(L^\infty L^2)}^{m - l} +
\norm{v}_{l^{s, q}_\Box(L^r L^p)}^l
\norm{v}_{l^{s, q}_\Box(L^\infty L^2)}^{m - l}\right) \\
& \lesssim &
\norm{u - v}_X \delta^m.
\end{eqnarray*}
Choosing $\delta$ small enough finishes the proof.
\end{proof}

\begin{proof}[Proof of Theorem \ref{thm:exponential}]
By the definition of the exponential nonlinearity \eqref{eqn:exp_nonlin} we have
\begin{equation}
f(u) =
\lambda \sum_{m = 1}^\infty \frac{\rho^m}{m!} \abs{u}^{2m} u.
\end{equation}
We have to show that the operator
\begin{equation}
u \mapsto \opT u \coloneqq W(t) u_0 + \int_0^t W(t - \tau) f(u(\tau)) \D{\tau}
\end{equation}
is a contractive self-mapping of the complete metric space $M(\delta)$ as defined in
\eqref{eqn:metric_space}. We only briefly sketch the argument for the self-mapping. We have
\begin{eqnarray*}
\norm{\opT u}_X & \lesssim &
\norm{u_0}_{M_{2, q}^s} +
\sum_{m = 1}^\infty \frac{\rho^m}{m!}
\norm{\int_0^t W(t - \tau) \pi^{2m + 1}(u) \D{\tau}}_X \\
& \lesssim &
\norm{u_0}_{M_{2, q}^s} +
\sum_{m = 1}^\infty \frac{\rho^m}{m!} \norm{u}_X^{2m + 1} \leq
\norm{u_0}_{M_{2, q}^s} + \left(\delta e^{\rho \delta^2} - 1 \right) \\
& \overset{!}{\leq} & \delta,
\end{eqnarray*}
where we set $\pi^{2m + 1}(u) \coloneqq \abs{u}^{2m} u$ and proceeded as in the proof of
Theorem \ref{thm:power}. This is justified because the assumptions on $r$ and $p$ are
exactly such that Theorem \ref{thm:power} can be applied to \emph{any}
$\pi^{2m + 1}(u)$ with $m \geq 1$  ($I_{m, d} \subseteq I_{n, d}$ for $m \leq n$). Thus,
if $\norm{u_0}_{M_{2, q}^s} \lesssim \frac{\delta}{2}$ and $\delta > 0$ is sufficiently
small, the operator $\opT$ is a self-mapping of $M(\delta)$.
\end{proof}

\begin{proof}[Proof of Corollary \ref{cor:scattering}]
Given initial data $u_0^{-}$ at $-\infty$ of sufficiently small
$M_{2, q}^s(\R^d)$-norm we show that the operator $\opS_-$ given by
\begin{align}
(\opS_- u) (t) & \coloneqq
W(t) u_0^{-} + \int_{-\infty}^t W(t - \tau) f(u(\tau)) \D{\tau}, &
t & \in \R,
\end{align}
is a contractive self-mapping of $M$ given in \eqref{eqn:metric_space}.
As the only difference between $S_-$ and $\opT$ is the lower limit of the
integral being $-\infty$ instead of $0$, the argument is very similar to the
proof of Theorem \ref{thm:power} and we omit the details. We denote the unique
fixed-point of $S_-$ by $u$. Following the proof of Theorem \ref{thm:power} we
notice, that
$u \in l^{s, q}_\Box(L^{\rho} L^{\sigma})$ for any
$\rho, \sigma \in [2, \infty]$ satisfying
\begin{equation}
\label{eqn:subadmissible}
\frac{2}{\rho} + \left(d - \iv{c_\gamma}\right) \iv{\sigma} \leq
\left(d - \iv{c_\gamma}\right) \iv{2}.
\end{equation}
This is because we can replace the $\sigma$ by $\sigma_\text{a}$ such that
$(\sigma_\text{a}, \rho)$ is admissible, i.e.
\begin{equation*}
\sigma \geq \sigma_\text{a} \coloneqq
\left(\iv{2} - \frac{2}{\rho \left(d - \iv{c_\gamma}\right)} \right)^{-1},
\end{equation*}
and hence
\begin{eqnarray*}
\norm{u}_{l^{s, q}_\Box (L^{\rho} L^{\sigma})} & \lesssim &
\norm{S_- u}_{l^{s, q}_\Box (L^{\rho} L^{\sigma_\text{a}})} \lesssim
\norm{u_0^-}_{M_{2, q}^s} +
\norm{\pi^{m + 1}(u)}_{l^{s, q}_\Box(L^{\tilde{r}} L^{\tilde{p}})} \\
& \lesssim &
\norm{u}_X^{m + 1} < \infty,
\end{eqnarray*}
where above we used Lemma \ref{lem:bernstein}, Strichartz estimates and the
Hölder-like inequality for Planchon-type spaces. Notice that, as
$t \to -\infty$, one has
\begin{equation}
\norm{u(t) - W(t)u_0^-}_{M_{2, q}^s} \to 0.
\end{equation}
This is because by Lemma \ref{lem:minkowski} and the assumption
$q \leq m + 1$ one has
\begin{eqnarray*}
& & \int_{-\infty}^t \norm{\pi^{m + 1}(u)(\cdot, \tau)}_{M_{2, q}^s} \D{\tau}
\leq \int_{-\infty}^\infty \norm{u(\cdot, \tau)}_{M_{2(m + 1), q}^s}^{m + 1} \D{\tau} \\
& = &
\norm{u}_{L^{m + 1}\left(M_{2(m + 1), q}^s\right)}^{m + 1} \leq
\norm{u}_{l^{s, q}_\Box(L^{m + 1}_t L^{2(m + 1)}_x)}^{m + 1}
\end{eqnarray*}
and, as $\rho = m + 1$ and $\sigma = 2(m + 1)$ satisfy the condition
\eqref{eqn:subadmissible} by the prerequisite $m \geq m_0$, the norm above
is finite. Finally, we define
\begin{equation}
u_0^+ \coloneqq u_0^- + \int_{-\infty}^\infty W(-\tau) \pi^{m + 1}(u(\tau)) \D{\tau}
\in M_{2, q}^s(\R^d)
\end{equation}
and notice that $\norm{u(t) - W(t)u_0^+}_{M_{2, q}^s} \to 0$ as $t \to +\infty$,
because
\begin{eqnarray*}
\norm{u(t) - W(t)u_0^+}_{M_{2, q}^s} & = &
\norm{W(-t) u(t) - u_0^+}_{M_{2, q}^s} \\
& = &
\norm{\int_t^{\infty} W(-\tau) \pi^{m + 1}(u(\tau)) \D{\tau}}_{M_{2, q}^s}
\end{eqnarray*}
and the same argument as for the convergence to $u_0^-$ applies. Hence, the scattering operator $u_0^- \overset{S}{\mapsto} u_0^+$ indeed carries a whole neighborhood of $0$ in $M_{2,q}^s$ into $M_{2,q}^s$.
\end{proof}

\section*{Acknowledgment}
The authors thank Professor Baoxiang Wang from the School of
Mathematical Sciences of the Peking University for fruitful discussions.

\printbibliography
\end{document}